\documentclass[9pt]{amsart}
\usepackage{amssymb, url, color}
\usepackage[all]{xy}
\usepackage{mathpazo} 
\usepackage{avant}

\usepackage[margin=4cm]{geometry}


\newcommand{\onto}{\twoheadrightarrow}

\newcommand{\Q}{\mathbb{Q}}
\newcommand{\Z}{\mathbb{Z}}

\newcommand{\G}{\Gamma}

\newcommand{\la}{\langle}
\newcommand{\ra}{\rangle}

\newcommand{\SL}{\mathrm{SL}}
\newcommand{\GL}{\mathrm{GL}}

\newcommand{\E}{\mathrm{E}}
\newcommand{\F}{\mathrm{F}}

\newtheorem{thm}{Theorem}

\newtheorem{prop}[thm]{Proposition}

\theoremstyle{definition}

\newtheorem*{notation}{Notation}

\begin{document}
\title[A true relative of Suslin's normality theorem]{\Large A true relative of Suslin's normality theorem}
\subjclass[2010]{19B37, 20H05}
\author{Bogdan Nica}
\begin{abstract}
We prove a normality theorem for the ``true'' elementary subgroups of $\SL_n(A)$ defined by the ideals of a commutative unital ring $A$. Our result is an analogue of a normality theorem, due to Suslin, for the standard elementary subgroups, and it greatly generalizes a theorem of Mennicke.
\end{abstract}
\address{Mathematisches Institut, Georg-August Universit\"at G\"ottingen}
\email{bogdan.nica@gmail.com}
\date{\today}

\maketitle

\section{Introduction}
Let $A$ be a unital commutative ring, and let $n\geq 2$. The elementary subgroup $\E_n(A)$ is the subgroup of $\SL_n(A)$ generated by the elementary matrices $e_{ij}(a)=1_n+ae_{ij}$, where $i\neq j$ and $a\in A$. Studying the size of $\E_n(A)$ in $\SL_n(A)$ is fraught with surprises and subtleties, but there is one fact which is reassuringly general: if $n\geq 3$ then $\E_n(A)$ is normal in $\SL_n(A)$. This is \emph{Suslin's Normality Theorem} \cite{Sus}.

Ideals define relative elementary subgroups. The \textbf{normal elementary subgroup} $\E_n(\pi)$ corresponding to an ideal $\pi$ of $A$ is the normal subgroup of $\E_n(A)$ generated by the elementary matrices with coefficients in $\pi$:
 \begin{align*}
 \E_n(\pi)=\la\!\la e_{ij}(a): a\in \pi, i\neq j\ra\!\ra_{\E_n(A)}
 \end{align*} 

In fact, Suslin proved the following relative normality theorem. What we have referred to as the Normality Theorem is the absolute case $\pi=A$.

\begin{thm}[Suslin]\label{Suslin}
Let $n\geq 3$. Then $\E_n(\pi)$ is normal in $\SL_n(A)$.
\end{thm}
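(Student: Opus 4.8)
The plan is to reduce the statement to a single, structurally transparent assertion about transvections, and then to attack that by commutator identities. First I would record two standard reductions. Since $\E_n(\pi)$ is generated as a group by the conjugates $h\,e_{ij}(a)\,h^{-1}$ with $h\in\E_n(A)$, $a\in\pi$, $i\ne j$, and since $\E_n(A)\subseteq\SL_n(A)$, proving $g\,\E_n(\pi)\,g^{-1}\subseteq\E_n(\pi)$ for every $g$ (the reverse inclusion then follows by applying this to $g^{-1}$) amounts to showing
\[
g\,e_{ij}(a)\,g^{-1}\in\E_n(\pi)\qquad(g\in\SL_n(A),\ a\in\pi,\ i\ne j).
\]
Because $\E_n(\pi)$ is normal in $\E_n(A)$ by definition, this membership is unchanged if $g$ is replaced by $\eta g$ with $\eta\in\E_n(A)$, so I am free to perform arbitrary elementary row operations on $g$. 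Writing $u$ for the $i$-th column of $g$ and $r$ for the $j$-th row of $g^{-1}$, the matrix in question is the transvection $g\,e_{ij}(a)\,g^{-1}=1_n+a\,u\,r$, and $i\ne j$ gives the orthogonality relation $r\,u=0$; note also that $u$ is unimodular, being a column of an invertible matrix. The theorem thus becomes: for $n\ge3$, $a\in\pi$, a unimodular column $u$ and a row $r$ with $ru=0$, one has $1_n+a\,ur\in\E_n(\pi)$.

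The engine of the proof is the case in which $u$ and $r$ share a vanishing coordinate. If $u_k=r_k=0$ for some index $k$, then a direct computation using $ru=0$ yields the commutator identity
\[
1_n+a\,ur=\big[\,1_n+a\,u\,\mathbf e_k^{\top},\ 1_n+\mathbf e_k\,r\,\big],
\]
where $\mathbf e_k$ is the $k$-th standard basis column. Here the first factor equals $\prod_{l\ne k}e_{lk}(a\,u_l)$ and lies in $\E_n(\pi)$, while the second equals $\prod_{l\ne k}e_{kl}(r_l)$ and lies in $\E_n(A)$. Since $\E_n(\pi)\trianglelefteq\E_n(A)$, the commutator of an element of $\E_n(\pi)$ with an element of $\E_n(A)$ again lies in $\E_n(\pi)$, which settles this case. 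The same mechanism yields the auxiliary ``stripping'' identities: when $u_k=0$ one may factor $1_n+a\,ur=(1_n+a\,u\,\hat r)(1_n+a\,r_k\,u\,\mathbf e_k^{\top})$ with $\hat r=r-r_k\,\mathbf e_k^{\top}$, trading a coordinate of $r$ for a clean factor in $\E_n(\pi)$, and symmetrically with the roles of $u$ and $r$ reversed.

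The crux --- and the step I expect to be the main obstacle --- is to manufacture such vanishing coordinates for an arbitrary unimodular $u$. Over a field, or more generally in low stable rank, one could simply use elementary row operations to carry $u$ to a standard basis vector, instantly producing the clean situation above; but over a general ring a unimodular column need not be completable, nor even shortenable, so no simplification of $u$ itself is available, and indeed a common zero cannot be produced by conjugation alone without contradicting non-completability. The resolution must therefore take place at the level of the group element rather than the vector: exploiting the extra index made available by $n\ge3$ together with the relation $ru=0$, one writes $1_n+a\,ur$ as an explicit finite ordered product of clean transvections --- the intermediate partial products no longer being transvections --- so that each factor is disposed of by the commutator identity above. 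Organizing this elimination so that it terminates for a completely general unimodular column, without ever simplifying $u$, is precisely the technical heart of the argument, and is where $n\ge3$ is indispensable. Finally, I would note that the hypothesis $n\ge3$ is genuinely necessary: for $n=2$ there is no room to run any of the commutator identities, and the conclusion fails.
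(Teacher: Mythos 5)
Your reductions, the clean-case commutator identity, and the stripping identities are all correct, and they are genuinely parallel to the machinery of the paper: your commutator $\big[\,1_n+a\,u\,\mathbf{e}_k^{\top},\ 1_n+\mathbf{e}_k\,r\,\big]$ plays the same role as the paper's commutator identity for suspended $\SL_2$-matrices, and both exploit the third index furnished by $n\geq 3$. The problem is that your proof stops exactly where Suslin's proof begins. The step you defer --- writing $1_n+a\,ur$ as an explicit ordered product of clean transvections --- is not a piece of bookkeeping to be ``organized''; it is the theorem. Nothing in your text indicates how such a factorization is produced or why the elimination you postulate should terminate, and, as you yourself observe, no manipulation of $u$ alone can create vanishing coordinates, so the factorization must come from somewhere outside the data $(u,r)$; your proposal names no source for it.

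The missing ingredient is Suslin's factorization lemma, and it is worth seeing why it is not a formality: it uses not only $u$, $r$ and $ru=0$, but a unimodularity certificate for $u$ drawn from the ambient matrix $g$. In your notation, let $r'$ be the $i$-th row of $g^{-1}$, so that $r'u=1$ while $ru=0$. One then checks coordinatewise that
\begin{align*}
r=\sum_{k<l} c_{kl}\,(u_l e_k-u_k e_l), \qquad c_{kl}=r_k r'_l-r_l r'_k,
\end{align*}
where $e_k$ is the $k$-th basic row vector. Since each row $u_l e_k-u_k e_l$ annihilates $u$, all cross terms vanish and the transvection splits as $1_n+a\,ur=\prod_{k<l}\big(1_n+ac_{kl}\, u\,(u_l e_k-u_k e_l)\big)$. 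Each factor has its row vector supported on the two indices $k,l$; as $n\geq 3$ there is an index $s\notin\{k,l\}$ where that row vanishes, and your stripping plus clean-case argument (or, as in the paper, a decomposition into a suspended $\SL_2$-matrix handled by the commutator identity, times elementary matrices over $\pi$) then places each factor in $\E_n(\pi)$, since $ac_{kl}\in\pi$. So your machinery would indeed finish the proof once this identity is supplied --- but without it there is a genuine gap at the central step, not merely an unpolished one.
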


Taking just the subgroup closure of the elementary matrices with coefficients in an ideal $\pi$ gives rise to the \textbf{true elementary subgroup} $\F_n(\pi)$: 
\begin{align*}
\F_n(\pi)=\la e_{ij}(a): a\in \pi, i\neq j\ra
\end{align*}
The notation is the one proposed by Tits \cite{Tits}, while the terminology is ours. Our main result is a relative normality theorem for the true elementary subgroups. Once again, the Normality Theorem appears as the absolute case $\pi=A$.

\begin{thm}\label{N}
Let $n\geq 3$. Then $\F_n(\pi)$ is normal in the subgroup $\{g \in \SL_n(A): g \textup{ mod } \pi \textup{ is diagonal}\}$.
\end{thm}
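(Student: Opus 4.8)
The plan is to verify normality on generators. Since $\F_n(\pi)=\la e_{ij}(a):a\in\pi,\ i\neq j\ra$ and each $e_{ij}(a)$ reduces to $1_n$ modulo $\pi$, we have $\F_n(\pi)\subseteq G_\pi$, where I write $G_\pi=\{g\in\SL_n(A):g\textup{ mod }\pi\textup{ is diagonal}\}$. Because $\F_n(\pi)$ is generated by the $e_{ij}(a)$, it suffices to prove that $g\,e_{ij}(a)\,g^{-1}\in\F_n(\pi)$ for every $g\in G_\pi$, every $a\in\pi$ and every $i\neq j$; conjugating the generating set then gives $g\,\F_n(\pi)\,g^{-1}\subseteq\F_n(\pi)$, and the reverse inclusion follows since $G_\pi$ is a group. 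I then rewrite the conjugate in rank-one form: with $v$ the $i$-th column of $g$ and $w$ the $j$-th row of $g^{-1}$ one has $g\,e_{ij}(a)\,g^{-1}=1_n+a\,vw$ and $wv=\delta_{ij}=0$. The hypothesis $g\in G_\pi$ records exactly that, modulo $\pi$, $v\equiv v_ie_i$ and $w\equiv w_je_j^{\,t}$ with $v_i,w_j$ units; equivalently, all off-diagonal coordinates $v_p$ ($p\neq i$) and $w_q$ ($q\neq j$) lie in $\pi$. The problem thus becomes a relative analogue of Suslin's key lemma: given $n\geq 3$, $wv=0$, $a\in\pi$, and these two congruence conditions, show $1_n+a\,vw\in\F_n(\pi)$.

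The tools I would use are three. First, the bilinearity $M(v,\alpha)M(v,\beta)=M(v,\alpha+\beta)$ for $M(v,\alpha)=1_n+v\alpha$, valid whenever $\alpha v=\beta v=0$, which lets one assemble the update out of orthogonal pieces $f_{pq}=v_qe_p^{\,t}-v_pe_q^{\,t}$: these are automatically orthogonal to $v$, and after scaling by $a$ their entries lie in $\pi$. Second, the spare index $k\notin\{i,j\}$ afforded by $n\geq 3$, through the commutator identity $[\,1_n+ue_k^{\,t},\,1_n+e_k\alpha\,]=1_n+u\alpha$, valid when $\alpha u=0=u_k=\alpha_k$; this is the mechanism that factors a two-dimensional unipotent update by routing it through coordinate $k$. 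Third, the elementary fact that a diagonal $\mathrm{diag}(t_1,\dots,t_n)\in\SL_n(A)$ normalizes $\F_n(\pi)$, since it sends $e_{pq}(x)\mapsto e_{pq}(t_pt_q^{-1}x)$ and so keeps coefficients in $\pi$; this disposes of the part of the conjugation coming from any honest diagonal lift of $g\bmod\pi$, reducing the work to the genuinely non-liftable situations.

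The main obstacle is the relative refinement itself. By Theorem \ref{Suslin} the conjugate already lies in $\E_n(\pi)$, and the task is precisely to sharpen this to the smaller $\F_n(\pi)$. Suslin's argument, run verbatim, factors $1_n+a\,vw$ into elementaries but only controls the factors up to $\E_n(A)$-conjugation, which is exactly why it lands in $\E_n(\pi)$: the coefficients produced by the spare-index routing are genuinely forced out of $\pi$ by the unit coordinates $v_i,w_j$. The crux, and the step I expect to be hardest, is to organize the factorization so that \emph{every} elementary factor has its coefficient in $\pi$. Here the two congruence conditions must be used simultaneously and symmetrically: the condition on $v$ keeps the column side of each commutator in $\F_n(\pi)$, the condition on $w$ keeps the row side in $\F_n(\pi)$, and the orthogonality $wv=0$ is what makes the residual unit contributions cancel rather than merely be conjugated away. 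The delicate point is that a naive peeling of the dominant term $e_{ij}(a v_iw_j)$ only shrinks the error by one power of $\pi$ at a time and does not terminate over an arbitrary ring; the real content is to replace this approximation by an exact identity in which the unit directions coming from $v_i$ and $w_j$ annihilate one another.
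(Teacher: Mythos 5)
Your setup is correct and matches the paper's frame: reduce to generators, write the conjugate as $1_n+a\,vw$ with $wv=0$, and record that $g$ being diagonal modulo $\pi$ means exactly that the off-diagonal coordinates of $v$ and $w$ lie in $\pi$. But the proposal stops precisely where the proof has to start. You name the crux yourself (``organize the factorization so that every elementary factor has its coefficient in $\pi$,'' ``replace this approximation by an exact identity'') and then leave it unresolved, and none of your three tools can resolve it. The exact identity you are asking for is Suslin's factorization through $2\times 2$ minors: writing $w=\sum_{k<l}c_{kl}(v_le_k-v_ke_l)$, with $c_{kl}$ the relevant minors of $g$, one gets
\begin{align*}
1_n+a\,vw=\prod_{k<l}\Big(S(v_k,v_l;ac_{kl})^{*kl}\prod_{s\neq k,l}e_{sk}(ac_{kl}v_sv_l)\,e_{sl}(-ac_{kl}v_sv_k)\Big),
\qquad
S(x,y;z)=\begin{pmatrix}1+xyz & -x^2z\\ y^2z & 1-xyz\end{pmatrix}.
\end{align*}
This already makes every honestly elementary factor have coefficient in $\pi$, since each carries the factor $a\in\pi$; the unit coordinates $v_i,w_j$ are quarantined inside the suspended $\SL_2$ symbols $S(v_k,v_l;ac_{kl})^{*kl}$ rather than spread across elementary factors. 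So the real problem is not, as you suggest, to make unit contributions from elementaries cancel; it is to show that the symbols themselves lie in $\F_n(\pi)$.

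That symbol step is the idea missing from your toolkit, and it cannot be produced by bilinearity, spare-index commutators, or diagonal conjugation. The paper proves the claim that $S(x,y;z)^*\in\F_n(\pi)$ whenever $y,z\in\pi$ (or symmetrically $x,z\in\pi$), which applies here because for each pair $k<l$ at most one of $k,l$ equals $i$, so at least one of $v_k,v_l$ is an off-diagonal coordinate and hence lies in $\pi$, while $ac_{kl}\in\pi$. The proof of the claim is a sequence of row and column operations with coefficients in $\pi$ that transforms the suspension of $S(x,y;z)$ into a suspension of $S(1,x;-yz)$, whose parameter $-yz$ now lies in $\pi^2$; one then invokes the mechanism behind Tits' Theorem~\ref{Tits}, namely that symbols with parameter in $\pi^2$ lie in $[\F_n(\pi),\F_n(\pi)]\subseteq\F_n(\pi)$. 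This $\pi^2$-descent via Tits' theorem is indispensable and is entirely absent from your proposal. Your own remark about naive peeling is accurate — shrinking the error one power of $\pi$ at a time never terminates over a general ring — but nothing in your plan replaces it. As it stands, the proposal is a correct problem statement together with an honest identification of the gap, not a proof.
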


Warming up the cold truth of the theorem is a short story explaining how we were led to this result. The setting is the most familiar case, $A=\Z$. Then the absolute elementary subgroup $\E_n(\Z)$ exhausts $\SL_n(\Z)$. But what becomes of the relative elementary groups?

In 1965, Mennicke \cite{Men1} proved the following remarkable fact: for $n\geq 3$, the normal elementary subgroup $\E_n(N)=\la\!\la e_{ij}(N): i\neq j\ra\!\ra$ coincides with the principal congruence subgroup $\G_n(N)=\{g \in \SL_n(\Z): g \equiv 1_n\; \mathrm{mod}\; N\}$. The proof is elementary, though somewhat intricate. Using earlier observations of Brenner, Mennicke was then able to derive the \emph{Normal Subgroup Theorem} stating that every normal subgroup of $\SL_n(\Z)$, $n\geq 3$, is either central or it contains a principal congruence subgroup. The \emph{Congruence Subgroup Property}, that every finite-index subgroup of $\SL_n(\Z)$, $n\geq 3$, contains a principal congruence subgroup, is an immediate consequence.  

More than three decades later, in 2000, Mennicke \cite{Men2} published the following counterpart of his 1965 result: for $n\geq 3$, the true elementary subgroup $\F_n(N)=\la e_{ij}(N): i\neq j\ra$ coincides with the congruence subgroup $\Delta_n(N)=\{g \in \SL_n(\Z): g \equiv 1_n\; \mathrm{mod}\; N, \; g_{ii} \equiv 1 \; \mathrm{mod}\; N^2\}$. Mennicke's approach to this more recent - in fact, surprisingly recent - theorem is significantly more complicated. It is, in a sense, an unnecessary proof: a short and conceptual argument shows that Mennicke's two theorems on elementary subgroups - Theorem E and Theorem F, so to speak - are equivalent. This equivalence can be formulated in the generality of commutative rings, the key ingredient being a 1976 theorem of Tits, stated below. As I have recently learnt from Andrei Rapinchuk, the equivalence was noted soon after \cite{Men2} was circulated in preprint form. To the best of my knowledge, the equivalence does not appear in the literature, so it seems worthwhile to discuss it. As it turns out, Theorem~\ref{N} will fit in this discussion.

Here is Tits' theorem, the $\SL_n$ case of a result proved in \cite{Tits} for Chevalley groups. Incidentally, we expect that our Theorem~\ref{N} can be generalized, as well, in the context of Chevalley groups.

\begin{thm}[Tits]\label{Tits}
Let $n\geq 3$. Then $\E_n(\pi^2)$ is contained in $\F_n(\pi)$.
\end{thm}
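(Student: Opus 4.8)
The plan is to peel the containment down to a single clean commutator identity. By definition $\E_n(\pi^2)$ is the normal closure in $\E_n(A)$ of the matrices $e_{ij}(a)$ with $a\in\pi^2$, and $\F_n(\pi)$ is a subgroup; so it is enough to show that every generator of this normal closure lies in $\F_n(\pi)$. For $n\ge 3$ the relative elementary subgroup admits the standard reduction (Suslin, Vaserstein) whereby the normal closure is already generated by the conjugates $e_{pq}(t)\,e_{ij}(a)\,e_{pq}(-t)$ in which the conjugating element is a \emph{single} elementary matrix $e_{pq}(t)$, $t\in A$. Finally, writing $a=\sum_s b_sc_s$ with $b_s,c_s\in\pi$ and using $e_{ij}(x+y)=e_{ij}(x)\,e_{ij}(y)$, I reduce to $a=bc$ with $b,c\in\pi$. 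The target is therefore
\[
e_{pq}(t)\,e_{ij}(bc)\,e_{pq}(-t)\in\F_n(\pi),\qquad b,c\in\pi,\ t\in A.
\]

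I would split this according to whether $e_{pq}(t)$ is opposite to $e_{ij}$. If $(p,q)\ne(j,i)$, the Steinberg relations give $e_{pq}(t)\,e_{ij}(bc)\,e_{pq}(-t)=\varepsilon\cdot e_{ij}(bc)$, where $\varepsilon$ is trivial or a single elementary matrix whose entry is $\pm t\,bc$ or $\pm bc\,t$; this entry lies in $\pi^2\subseteq\pi$, so both factors, hence the product, lie in $\F_n(\pi)$. The interesting case is the opposite one, $(p,q)=(j,i)$. Here I use $n\ge 3$ to choose $k\notin\{i,j\}$ and write $e_{ij}(bc)=[e_{ik}(b),e_{kj}(c)]$, with $[x,y]=xyx^{-1}y^{-1}$. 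Since conjugation distributes over commutators,
\[
e_{ji}(t)\,e_{ij}(bc)\,e_{ji}(-t)=\bigl[\,e_{ji}(t)\,e_{ik}(b)\,e_{ji}(-t),\ e_{ji}(t)\,e_{kj}(c)\,e_{ji}(-t)\,\bigr].
\]
The crucial observation is that $e_{ji}(t)$ is opposite to neither $e_{ik}(b)$ nor $e_{kj}(c)$ (as $k\notin\{i,j\}$), so the Steinberg relations rewrite the two conjugated factors as $e_{jk}(tb)\,e_{ik}(b)$ and $e_{ki}(-ct)\,e_{kj}(c)$. Because $b,c\in\pi$, the new entries $tb$ and $ct$ again lie in $\pi$; thus both factors lie in $\F_n(\pi)$, and so does their commutator.

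The main obstacle is exactly the opposite case, and it is there that both hypotheses are spent. The dimension bound $n\ge 3$ is what supplies the auxiliary index $k$ needed to realize $e_{ij}(bc)$ as a commutator; without a third row and column this manoeuvre is unavailable, matching the failure of the statement for $n=2$. The passage from $\pi$ to $\pi^2$ is equally essential: it is the factorization of the total entry $bc$ into two factors $b,c\in\pi$ that keeps the rewritten pieces $tb$ and $ct$ at level $\pi$. For a bare coefficient in $\pi$ with no such factorization, conjugation by the opposite elementary leaves $\F_n(\pi)$, which is precisely why $\F_n(\pi)$ is not normal and why the theorem can only promise containment of $\E_n(\pi^2)$. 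I should add that the one ingredient I am taking off the shelf is the single-elementary reduction of the normal closure; proving it from scratch is the laborious relative commutator calculus, and it is the place where normality of the whole construction is genuinely established.
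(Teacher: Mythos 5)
Your proof is correct, but it takes a genuinely different route from the paper. You outsource the global difficulty to the Vaserstein--van der Kallen generation lemma (for $n\geq 3$, the relative group $\E_n(\pi^2)$ is generated by the conjugates $e_{pq}(t)\,e_{ij}(a)\,e_{pq}(-t)$ with $a\in\pi^2$, $t\in A$ --- indeed the standard form of the lemma says the opposite conjugates $e_{ji}(t)\,e_{ij}(a)\,e_{ji}(-t)$ alone suffice, so your statement is a safe weakening), and then dispatch each generator by Steinberg relations. Your computations check out in both cases: in the non-opposite case the stray entry $\pm tbc$ or $\pm bct$ lies in $\pi^2\subseteq\pi$, and in the opposite case the factorization $e_{ij}(bc)=[e_{ik}(b),e_{kj}(c)]$ ensures that conjugation by $e_{ji}(t)$ only meets the non-opposite roots $(i,k)$ and $(k,j)$, producing $e_{jk}(tb)\,e_{ik}(b)$ and $e_{ki}(-ct)\,e_{kj}(c)$ with all entries in $\pi$. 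The paper argues quite differently: it runs every conjugate $g^{-1}e_{ij}(a)g$, for \emph{arbitrary} $g\in\SL_n(A)$, through the Suslin factorization into suspended symbols $S(v_k,v_l;ac_{kl})^{*kl}$ and elementary matrices, then uses a two-parameter commutator identity together with the additivity $S(x,y;z+z')=S(x,y;z)\,S(x,y;z')$ to place everything in $[\F_n(\pi),\F_n(\pi)]$ when $a\in\pi^2$. The trade-offs are real. The paper's route is self-contained --- your cited lemma is standard and creates no circularity, but its proof is itself a laborious relative commutator calculus, comparable to the factorization machinery the paper must build anyway for Theorems 1 and 2 --- and it delivers two strictly stronger conclusions: containment in the commutator subgroup $[\F_n(\pi),\F_n(\pi)]$ rather than merely in $\F_n(\pi)$, and invariance under conjugation by all of $\SL_n(A)$ rather than just $\E_n(A)$. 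Your route buys modularity and transparency: granted the lemma, the argument is a clean two-case Steinberg computation, and your closing analysis of where $n\geq 3$ and the passage to $\pi^2$ are spent matches the paper's $\SL_2$ counterexamples exactly.
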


Mennicke's complicated proof of the identity $\F_n(N)=\Delta_n(N)$ can be circumvented, but there is something worth saving. Most of Mennicke's argument in \cite{Men2} goes into proving the weaker statement that $\F_n(N)$ is normalized by $\Delta_n(N)$ \cite[Thm.2]{Men2}. In a subsequent remark, Mennicke adds that such a normality relation holds, more generally, for arithmetic Dedekind rings. Our main theorem proves much more: normality holds in any commutative ring, and with respect to a larger congruence subgroup. The proof is also much simpler.

\section{The Suslin factorization and applications}

Following Suslin \cite{Sus}, we factorize conjugates of elementary matrices into products of elementary matrices and ``suspended'' $\SL_2$ matrices. 

Let $g \in \SL_n(A)$, where $n\geq 2$, let $a\in A$, and fix $i\neq j$. We start by writing
\begin{align*}
g^{-1}e_{ij}(a)g=1_n+a(g^{-1}e_{ij}g)=1_n+a\cdot vw
\end{align*}
where $v$ is the $i$-th column of $g^{-1}$, and $w$ is the $j$-th row of $g$. If we further let $w'$ denote the $i$-th row of $g$, then it can be verified that
\begin{align*}
w=\sum_{k<l} c_{kl} (v_l e_k-v_k e_l), \qquad c_{kl}:=w_kw'_l-w_lw'_k=g_{jk}g_{il}-g_{jl}g_{ik}
\end{align*}
where $e_k=(0,\dots,1\dots,0)$ is the $k$-th basic row vector.
As $(v_l e_k-v_k e_l)v=0$, we get
\begin{align*}
g^{-1}e_{ij}(a)g=1_n+\sum_{k<l} ac_{kl}\cdot v(v_l e_k-v_k e_l)= \prod_{k<l} \Big(1_n+ac_{kl} \cdot v(v_l e_k-v_k e_l)\Big).
\end{align*}
Next, we decompose each factor $1_n+ac_{kl} \cdot v(v_l e_k-v_k e_l)$ as
\begin{align*}
\Big(  1_n & +ac_{kl} \cdot (v_k e^k+v_l e^l) (v_l e_k-v_k e_l)\Big)\prod_{s\neq k,l}\Big( 1_n+ac_{kl} v_s e^s(v_l e_k-v_k e_l)\Big)\\
&=\begin{pmatrix}1+ ac_{kl} v_kv_l & -ac_{kl} v_k^2\\ ac_{kl} v_l^2 & 1- ac_{kl} v_kv_l
\end{pmatrix}^{* kl} \prod_{s\neq k,l}\big( 1_n+ac_{kl} v_s v_l \cdot e_{sk}\big)\big( 1_n-ac_{kl} v_s v_k \cdot e_{sl}\big).
\end{align*}
In the first displayed line, $e^k$ denotes the $k$-th basic column vector. In the second displayed line, $\left( \begin{smallmatrix} x&y\\ z&t \end{smallmatrix} \right)^{* kl}\in \SL_n(A)$ is the \textbf{$(k,l)$-suspension} of the matrix $\left( \begin{smallmatrix} x&y\\ z&t \end{smallmatrix} \right)\in \SL_2(A)$, namely the matrix obtained from the identity matrix $1_n$ by grafting $x$ in the $(k,k)$-entry, $y$ in the $(k,l)$-entry, $z$ in the $(l,k)$-entry, respectively $t$ in the $(l,l)$-entry.

\begin{notation}
For $x,y,z\in A$, we define the symbol
\begin{align*}S(x,y;z)=\begin{pmatrix}1+ xyz & -x^2z\\ y^2z & 1- xyz
\end{pmatrix}\in \SL_2(A). \end{align*}
\end{notation}

Then the above computation can be summarized as the following \emph{Suslin factorization}:
\begin{align*}
g^{-1}e_{ij}(a)g=\prod_{k<l} \Big( S(v_k,v_l; ac_{kl})^{* kl} \prod_{s\neq k,l} e_{sk}(ac_{kl} v_s v_l)\: e_{sl}(-ac_{kl} v_s v_k)\Big).
\end{align*}

For the remainder of the section, we assume that $n\geq 3$. Recall that $\pi$ denotes an ideal of $A$. Our commutator convention is that $[g,h]=g^{-1}h^{-1}gh$.

\subsection{Proof of Suslin's Theorem~\ref{Suslin}} Row- and column-reductions lead to the commutator identity
\begin{align*}
\begin{pmatrix}1+ xyz & -x^2z & 0\\ y^2z & 1- xyz & 0\\ 0 & 0 & 1
\end{pmatrix}=\begin{bmatrix}\begin{pmatrix}1 & 0& xz\\ 0 & 1 & yz\\ 0 & 0 & 1
\end{pmatrix},  \begin{pmatrix}1 & 0& 0\\ 0 & 1 & 0\\ y & -x & 1
\end{pmatrix}\end{bmatrix}
\end{align*}
which shows that $S(x,y;z)^*\in\E_n(\pi)$ whenever $z\in \pi$. Here, the notation $S(x,y;z)^*$ stands for any suspension of $S(x,y;z)$. The Suslin factorization yields that $g^{-1}e_{ij}(a)g\in \E_n(\pi)$ whenever $a\in \pi$. 

\subsection{Proof of Tits' Theorem~\ref{Tits}} In fact, we will prove the stronger assertion that $\E_n(\pi^2)$ is contained in the commutator subgroup $[\F_n(\pi), \F_n(\pi)]$. We start from the following commutator identity, generalizing the one of the previous paragraph:
\begin{align*}
\begin{pmatrix}1+ xyz_1z_2 & -x^2z_1z_2 & 0\\ y^2z_1z_2 & 1- xyz_1z_2 & 0\\ 0 & 0 & 1
\end{pmatrix}=\begin{bmatrix}\begin{pmatrix}1 & 0& xz_1\\ 0 & 1 & yz_1\\ 0 & 0 & 1
\end{pmatrix},  \begin{pmatrix}1 & 0& 0\\ 0 & 1 & 0\\ yz_2 & -xz_2 & 1
\end{pmatrix}\end{bmatrix}
\end{align*}
We see that $S(x,y;z_1z_2)^*\in[\F_n(\pi), \F_n(\pi)]$ for $z_1,z_2\in \pi$. Observe, on the other hand, that symbols enjoy the additivity rule $S(x,y;z+z')=S(x,y;z)\: S(x,y;z')$. Therefore $S(x,y;z)^*\in [\F_n(\pi), \F_n(\pi)]$ for $z\in \pi^2$. We also have that $e_{ij}(z)\in [\F_n(\pi), \F_n(\pi)]$ for $z\in \pi^2$ (and $i\neq j$, as usual). This follows from the relations $e_{ij}(u+v)=e_{ij}(u) e_{ij}(v)$, respectively $e_{ij}(uv)=[e_{ik}(u), e_{kj}(v)\big]$ for distinct  $i,j,k$. And so, by the Suslin factorization, we find that $g^{-1} e_{ij}(a)g\in [\F_n(\pi), \F_n(\pi)]$ whenever $a\in \pi^2$.

\subsection{Proof of Theorem~\ref{N}} We claim that $S(x,y;z)^*\in\F_n(\pi)$ whenever $y,z\in \pi$ (or, symmetrically, $x,z\in \pi$). Indeed, row and column operations over $\pi$ (indicated by $r$, respectively $c$) allow for the following transition:

\begin{align*}
&\begin{pmatrix}
1+xyz & -x^2z & 0\\
y^2z & 1-xyz & 0\\
0 & 0 & 1
\end{pmatrix}
\stackrel{c}{\rightsquigarrow}
\begin{pmatrix}
1+xyz & -x^2z & 0\\
y^2z & 1-xyz & 0\\
-y & 0 & 1
\end{pmatrix}
\stackrel{r,r}{\rightsquigarrow}
\begin{pmatrix}
1 & -x^2z & xz\\
0 & 1-xyz & yz\\
-y & 0 & 1
\end{pmatrix}
\stackrel{r}{\rightsquigarrow}\\
&\qquad\qquad\qquad\begin{pmatrix}
1 & -x^2z & xz\\
0 & 1-xyz & yz\\
0 & -x^2yz & 1+xyz
\end{pmatrix}
\stackrel{c,c}{\rightsquigarrow}
\begin{pmatrix}
1 & 0 & 0\\
0 & 1-xyz & yz\\
0 & -x^2yz & 1+xyz
\end{pmatrix}\end{align*}
\smallskip

The last matrix is a suspension of $S(1,x; -yz)$, hence in $\F_n(\pi)$ by what we have learned in the proof of Tits' theorem. We conclude from the Suslin factorization that $g^{-1}e_{ij}(a)g\in \F_n(\pi)$ whenever $a\in \pi$ and $g \in \SL_n(A)$ has all off-diagonal entries in $\pi$.

\section{Congruence subgroups versus elementary subgroups}
The elementary subgroups $\E_n(\pi)$ and $\F_n(\pi)$, corresponding to an ideal $\pi$ of $A$, are relative versions of the absolute elementary subgroup $\E_n(A)$. We will now introduce two subgroups which play a similar role with respect to $\SL_n(A)$. In the particular case of $A=\Z$, we have already encountered them in the Introduction.

The \textbf{principal congruence subgroup} is the subgroup 
\begin{align*}
\G_n(\pi)=\{g \in \SL_n(A): g \equiv 1_n \; \textrm{mod}\; \pi\},
\end{align*} 
that is the kernel of the reduction homomorphism $\SL_n(A)\to \SL_n(A/\pi)$. Its elementary counterpart is the normal elementary subgroup $\E_n(\pi)$. The relation between $\E_n(\pi)$ and $\G_n(\pi)$ has been investigated since the 1960s, and it is crucial for understanding the subgroup structure of $\SL_n(A)$, as well as for the purposes of lower algebraic $K$-theory.

The congruence analogue of the true elementary subgroup $\F_n(\pi)$ is the subgroup 
\begin{align*}
\Delta_n(\pi)=\{g \in \SL_n(A): g \equiv 1_n \;\textrm{mod}\; \pi,\; g_{ii} \equiv 1 \; \textrm{mod}\; \pi^2\}.
\end{align*} It seems quite suggestive to think of $\Delta_n(\pi)$ as the \textbf{secondary congruence subgroup}. 

Suslin's Theorem~\ref{Suslin} says that a normality feature, which is obviously enjoyed by $\G_n(\pi)$ for $n\geq 2$, turns out to be satisfied by $\E_n(\pi)$ as soon as $n\geq 3$. This intuition, that  visible congruence facts have hidden elementary analogues in ``higher rank'', is what guided us towards Theorem~\ref{N}. Indeed, it is easy to check that $\Delta_n(\pi)$ is normal in 
\begin{align*}
\Omega_n(\pi)=\{g \in \SL_n(A): g \textup{ mod } \pi \textup{ is diagonal}\}
\end{align*} 
for $n\geq 2$, by using the diagonal multiplicativity $(gh)_{ii}=g_{ii}\:h_{ii}$ mod $\pi^2$ for $g,h\in \Omega_n(\pi)$. In more detail, Suslin's theorem hinges on having $S(x,y;z)^*\in\E_n(\pi)$ for $z\in \pi$, a higher rank refinement of the obvious fact that $S(x,y;z)\in\G_2(\pi)$ for $z\in \pi$. In the proof of Theorem~\ref{N}, the claim that $S(x,y;z)^*\in\F_n(\pi)$ for $y,z\in \pi$ is suggested by  the obvious fact that $S(x,y;z)\in\Delta_2(\pi)$ for $y,z\in \pi$. 

We may gather the elementary and the congruence subgroups we have defined in the following diagram. The arrows denote inclusions, all being obvious except for the dashed one which is the content of Tits' theorem, and which in addition requires the ``higher rank'' assumption $n\geq 3$.

\begin{align*}
\xymatrix{
&&&\Omega_n(\pi) &\\
&&&\G_n(\pi)\ar@{>}[u] &\\
&&\Delta_n(\pi)  \ar@{>}[ru] && \E_n(\pi)\ar@{>}[lu]\\
&&&\F_n(\pi)  \ar@{>}[lu]\ar@{>}[ru]&\\
\G_n(\pi^2)  \ar@{>}[rruu]&&&&\\
&\E_n(\pi^2)  \ar@{>}[lu]\ar@{-->}[rruu]&&&
}
\end{align*}
\smallskip

Consider the reduction homomorphism $r: \G_n(\pi)\to \mathfrak{gl}_n(\pi/\pi^2)$, given by $g\mapsto g-1_n$ mod $\pi^2$. Here $\mathfrak{gl}_n(\pi/\pi^2)$ denotes the additive group of $n\times n$ matrices over $\pi/\pi^2$. As 
\begin{align*}
1 \equiv \prod g_{ii}\equiv 1+\sum (g_{ii}-1) \textrm{ mod }\pi^2
\end{align*} for each $g\in \G_n(\pi)$, the range of $r$ lies within the zero-trace subgroup $\mathfrak{sl}_n(\pi/\pi^2)$. Now $\mathfrak{sl}_n(\pi/\pi^2)$ is generated by $\{\bar{a} e_{ij}: i\neq j, a\in \pi\}$ together with $\{\bar{a}e_{i+1\: i+1}-\bar{a}e_{ii}: i\neq n, a\in \pi\}$. The off-diagonal generators are visibly in the range of $r$, as $1_n+ae_{ij}\mapsto \bar{a} e_{ij}$. Given $a\in \pi$, note that
\begin{align*}
\begin{pmatrix} 1& 1\\ 0 &1 \end{pmatrix}
\begin{pmatrix} 1& 0\\ a &1 \end{pmatrix}
\begin{pmatrix} 1& -1\\ 0 &1 \end{pmatrix}=
\begin{pmatrix} 1+a& -a\\ a &1-a \end{pmatrix}\in \G_2(\pi)\quad \xrightarrow{\: r\:}\quad\begin{pmatrix} \bar{a} & -\bar{a}\\ \bar{a}& -\bar{a}\end{pmatrix}.
\end{align*}
Taking the $(i, i+1)$-suspension, and keeping in mind that $\bar{a}e_{i\: i+1}$ and $\bar{a}e_{i+1\: i}$ are already in the image of $r$, we see that the diagonal differences are in the range of $r$. To conclude, the reduction homomorphism $r: \G_n(\pi)\to \mathfrak{sl}_n(\pi/\pi^2)$ is onto, with kernel $\G_n(\pi^2)$. (As an aside, let us point out that $\Delta_n(\pi)$ is the preimage of the zero-diagonal subgroup of $\mathfrak{sl}_n(\pi/\pi^2)$, so $\G_n(\pi)/\Delta_n(\pi)\simeq (\pi/\pi^2,+)^{n-1}$ and $\Delta_n(\pi)/\G_n(\pi^2)\simeq (\pi/\pi^2,+)^{n^2-n}$. This relative position of $\Delta_n(\pi)$ explains the longer arrows in our diagram. We leave it to the interested reader to check that $\Omega_n(\pi)/\G_n(\pi)\simeq (\GL_1(A/\pi))^{n-1}$.)

As the previous argument shows, $r(\E_n(\pi))=r(\G_n(\pi))$ and $r(\F_n(\pi))=r(\Delta_n(\pi))$. Thus the elementary subgroups are first-order approximants of the congruence subgroups, in the sense that
\begin{align*}
\G_n(\pi)=\E_n(\pi)\cdot \G_n(\pi^2), \qquad \Delta_n(\pi)=\F_n(\pi)\cdot \G_n(\pi^2).
\end{align*}
We infer that the natural homomorphism $\G_n(\pi^2)/\E_n(\pi^2)\to\G_n(\pi)/\E_n(\pi)$ is onto for $n\geq 2$. When $n\geq 3$, Tits' theorem allows us to insert the coset space $\Delta_n(\pi)/\F_n(\pi)$ in between, so that we have the following:
\begin{thm}\label{squeeze} Let $n\geq 3$. Then the inclusions $\G_n(\pi^2)\subseteq \Delta_n(\pi)\subseteq\G_n(\pi)$ induce surjections
\begin{align*}
\G_n(\pi^2)/\E_n(\pi^2)\onto\Delta_n(\pi)/\F_n(\pi)\onto\G_n(\pi)/\E_n(\pi).
\end{align*}
\end{thm}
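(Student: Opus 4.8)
The plan is to read off both surjections directly from the two factorizations established just above the statement, with Tits' Theorem~\ref{Tits} supplying the only nontrivial check. The maps in question are the ones induced by the inclusions of numerators: the second sends $g\F_n(\pi)\mapsto g\E_n(\pi)$, and the first sends $g\E_n(\pi^2)\mapsto g\F_n(\pi)$. (Here I use that $\G_n(\pi^2)\subseteq\Delta_n(\pi)$, since $g\equiv 1_n$ mod $\pi^2$ forces $g\equiv 1_n$ mod $\pi$ and $g_{ii}\equiv 1$ mod $\pi^2$.)

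First I would verify that these coset maps make sense. For the second map, well-definedness amounts to the containment $\F_n(\pi)\subseteq\E_n(\pi)$, which is immediate because each generator $e_{ij}(a)$, $a\in\pi$, of $\F_n(\pi)$ already lies among the generators of $\E_n(\pi)$; this is one of the obvious arrows of the diagram. For the first map, well-definedness amounts to $\E_n(\pi^2)\subseteq\F_n(\pi)$, which is precisely Tits' Theorem~\ref{Tits}, and the one place where the hypothesis $n\geq 3$ enters. I would also point out that $\E_n(\pi)$ and $\E_n(\pi^2)$ are normal in $\SL_n(A)$ by Suslin, while $\F_n(\pi)$ is normal in $\Delta_n(\pi)\subseteq\Omega_n(\pi)$ by our Theorem~\ref{N}; hence all three coset spaces are genuine quotient groups and the two maps are group homomorphisms, exactly as anticipated in the discussion around the diagram.

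For surjectivity I would invoke the two factorizations $\G_n(\pi)=\E_n(\pi)\cdot\G_n(\pi^2)$ and $\Delta_n(\pi)=\F_n(\pi)\cdot\G_n(\pi^2)$. Since $\E_n(\pi)$ is normal in $\SL_n(A)$, it commutes past $\G_n(\pi^2)$ as a set, and using $\G_n(\pi^2)\subseteq\Delta_n(\pi)$ the first factorization rearranges to $\G_n(\pi)=\Delta_n(\pi)\cdot\E_n(\pi)$; writing $g=\delta e$ with $\delta\in\Delta_n(\pi)$, $e\in\E_n(\pi)$ shows every coset $g\E_n(\pi)$ meets $\Delta_n(\pi)$, so the second map is onto. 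Likewise, since $\G_n(\pi^2)$ is normal in $\SL_n(A)$, the second factorization rearranges to $\Delta_n(\pi)=\G_n(\pi^2)\cdot\F_n(\pi)$; writing $\delta=hf$ with $h\in\G_n(\pi^2)$, $f\in\F_n(\pi)$ gives $\delta\F_n(\pi)=h\F_n(\pi)$, exhibiting every coset as an image, so the first map is onto.

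I do not expect a serious obstacle here: the substance has already been spent on Tits' theorem and, upstream of it, on the surjectivity of $r$ with kernel $\G_n(\pi^2)$, which is what produces the two factorizations. The only thing demanding a little care is the bookkeeping with one- versus two-sided cosets, and this is handled uniformly by the normality of $\G_n(\pi^2)$, $\E_n(\pi)$, and $\E_n(\pi^2)$ in $\SL_n(A)$.
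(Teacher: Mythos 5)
Your proof is correct and takes essentially the same route as the paper: the surjectivity of both maps is read off from the factorizations $\G_n(\pi)=\E_n(\pi)\cdot\G_n(\pi^2)$ and $\Delta_n(\pi)=\F_n(\pi)\cdot\G_n(\pi^2)$, with Tits' Theorem~\ref{Tits} supplying the containment $\E_n(\pi^2)\subseteq\F_n(\pi)$ that makes the first map well defined. The paper states this very tersely (``Tits' theorem allows us to insert the coset space $\Delta_n(\pi)/\F_n(\pi)$ in between''), and your explicit coset bookkeeping via the normality of $\G_n(\pi^2)$ and $\E_n(\pi)$ is precisely the implicit content of that remark.
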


In particular, for $n\geq 3$, the property that $\E_n(\pi)=\G_n(\pi)$ for every ideal $\pi$ is equivalent to the property that $\F_n(\pi)=\Delta_n(\pi)$ for every ideal $\pi$. This is the conceptual explanation, promised in the Introduction, for the equivalence between Mennicke's two theorems.
 
So far, the discussion did not involve Theorem~\ref{N}. But our theorem does have something to add to Theorem~\ref{squeeze}, namely the fact that $\Delta_n(\pi)/\F_n(\pi)$ is actually a quotient \emph{group}, and not just a coset space.

Theorem~\ref{squeeze} can be applied, for instance, when $A$ is the ring of integers in a number field $K$. The Bass - Milnor - Serre solution \cite{BMS67} to the Congruence Subgroup Problem for $\SL_n$, $n\geq 3$, establishes that each quotient $\G_n(\pi)/\E_n(\pi)$ is a finite cyclic group whose order divides the number of roots of unity in $K$, and which is furthermore trivial if $K$ admits a real embedding. The same is then true for the quotient $\Delta_n(\pi)/\F_n(\pi)$.

\section{The case of $\SL_2$} 
For $n=2$, both Theorem~\ref{Tits} and Theorem~\ref{N} fail in general. However, there is some occasional truth to them. 

We illustrate the first point in the familiar case $A=\Z$. 

\begin{prop}
In $\SL_2(\Z)$, the following hold for $N\geq 4$: 
\begin{itemize}
\item $\F_2(N)$ is not normal in $\Delta_2(N)$,
\item $\E_2(N^2)$ is not contained in $\F_2(N)$.
\end{itemize}
\end{prop}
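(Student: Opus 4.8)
The two assertions rest on different mechanisms, so I would prove them separately; in both I use that $\F_2(N)=\la e_{12}(N),e_{21}(N)\ra$ is a \emph{free group of rank $2$}, the generators satisfying Sanov's ping-pong criterion $N^2\geq 4$. For the first bullet the engine is the classical fact that a nontrivial finitely generated normal subgroup of a free group has finite index. It therefore suffices to realize $\F_2(N)$ as an infinite-index subgroup of an ambient free group, and I would take that ambient group to be $\Delta_2(N)$. For $N\geq 3$ the congruence subgroup $\G_2(N)$ is torsion-free, hence free, and $\Delta_2(N)$ is a finite-index subgroup of it (it contains $\G_2(N^2)$, while $\G_2(N)/\G_2(N^2)$ is finite), so $\Delta_2(N)$ is free as well. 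Counting ranks through the multiplicativity of the Euler characteristic, $\G_2(N)$ is free of rank $1+\tfrac1{12}[\SL_2(\Z):\G_2(N)]$; since $[\SL_2(\Z):\G_2(N)]=N^3\prod_{p\mid N}(1-p^{-2})\geq 24>12$ for $N\geq 3$, this rank exceeds $2$. As a free group of rank $2$ has infinite index in any free group of larger rank, $\F_2(N)$ has infinite index in $\G_2(N)$, and a fortiori in $\Delta_2(N)$. The free-group theorem then forbids $\F_2(N)$ from being normal in $\Delta_2(N)$.

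For the second bullet I would exhibit an explicit element of $\E_2(N^2)$ and certify by hand that it is not a word in the generators of $\F_2(N)$. Since $\SL_2(\Z)=\E_2(\Z)$, the subgroup $\E_2(N^2)$ is normal in $\SL_2(\Z)$, so conjugating the generator $e_{12}(N^2)$ by $e_{21}(1)$ keeps us inside $\E_2(N^2)$:
\[
w:=e_{21}(1)\,e_{12}(N^2)\,e_{21}(1)^{-1}=\begin{pmatrix}1-N^2 & N^2\\ -N^2 & 1+N^2\end{pmatrix}\in\E_2(N^2).
\]
It remains to show $w\notin\F_2(N)$.

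To do so I would quantify the ping-pong. On column vectors put $P=\{(x,y):|x|>|y|\}$ and $Q=\{(x,y):|x|<|y|\}$; for $N\geq 2$ every nonzero power of $e_{12}(N)$ sends $Q$ into $P$ and every nonzero power of $e_{21}(N)$ sends $P$ into $Q$, and each such application multiplies the $\ell^\infty$-norm by a factor strictly larger than $N-1$. Feeding the basis vector appropriate to the rightmost syllable of a reduced word of syllable length $m$ (namely $(1,0)$ if it ends in a power of $e_{21}(N)$, otherwise $(0,1)$) through the word reproduces one of its columns and yields the growth estimate $\|\cdot\|_\infty>(N-1)^m$. Because $\|w\|_\infty=N^2+1$ while $(N-1)^3>N^2+1$ for $N\geq 4$, any representation of $w$ inside $\F_2(N)$ would have syllable length at most $2$. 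These short cases are eliminated directly: a word of length $1$ is a power of a single generator and so has a vanishing off-diagonal entry, whereas both off-diagonal entries of $w$ equal $\pm N^2$; a word of length $2$ has a diagonal entry equal to $1$, whereas the diagonal entries of $w$ are $1\mp N^2\neq 1$. Hence $w\notin\F_2(N)$.

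I expect the quantitative ping-pong to be the crux of the argument: one must verify that the two cones are genuinely interchanged by the generators and, more delicately, that the $\ell^\infty$-norm expands by a definite factor at \emph{every} syllable, so that the estimate compounds along the entire reduced word. Once this uniform expansion is secured, the comparison $(N-1)^3>N^2+1$ is precisely what pins down the hypothesis $N\geq 4$ and reduces matters to the two easily dispatched short cases. The first bullet, by contrast, is essentially structural, and should require only the standard torsion-freeness and rank count for $\G_2(N)$ together with the free-group theorem.
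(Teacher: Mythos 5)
Your proof is correct, but it takes a genuinely different route---in fact two different routes, one per bullet, where the paper uses a single device for both. The paper sets $\alpha=e_{12}(1)$, $\beta=e_{21}(N)$, notes that these generate a free group of rank $2$ when $N\geq 4$ (they are simultaneously conjugate to $\left(\begin{smallmatrix}1&\sqrt N\\ 0&1\end{smallmatrix}\right)$ and $\left(\begin{smallmatrix}1&0\\ \sqrt N&1\end{smallmatrix}\right)$, and $\sqrt N\geq 2$), and observes that $\F_2(N)=\la \alpha^N,\beta\ra$. The single element $\omega=\alpha\beta^N\alpha^{-1}$---your $w$ is its transposed twin---then witnesses both failures at once: $\omega\in\E_2(N^2)\cap\Delta_2(N)$, yet $\omega\notin\F_2(N)$ and $\omega^{-1}\alpha^N\omega=\alpha\beta^{-N}\alpha^N\beta^N\alpha^{-1}\notin\F_2(N)$, because neither reduced word is a word in $\alpha^N$ and $\beta$; both non-memberships are thus subgroup-membership facts inside one ambient rank-$2$ free group. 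Your first bullet is instead structural and non-constructive: freeness of $\Delta_2(N)$, the Euler-characteristic rank count showing $\F_2(N)$ has infinite index, and the Greenberg/Karrass--Solitar theorem that a nontrivial finitely generated normal subgroup of a free group has finite index. This costs more background (Nielsen--Schreier, the index formula, torsion-freeness and freeness of $\G_2(N)$), but it buys a stronger conclusion: no nontrivial finitely generated subgroup of infinite index---in particular no conjugate of $\F_2(N)$---can be normal in $\Delta_2(N)$. Your second bullet replaces the paper's standard-but-unproved free-group membership assertion by a quantitative ping-pong bound, $\|\cdot\|_\infty>(N-1)^m$ for reduced syllable length $m$, plus inspection of the short cases; this is completely self-contained, needs no freeness at all, and makes the threshold $N\geq 4$ appear as the explicit inequality $(N-1)^3>N^2+1$, whereas in the paper $N\geq 4$ is exactly what makes $\la\alpha,\beta\ra$ free. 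The overall trade-off: the paper's argument is shorter, unified, and fully explicit; yours is longer and heterogeneous, but each half is independently robust, the first more general and the second more elementary.
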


\begin{proof}
Let
\begin{align*}
\alpha=\begin{pmatrix} 1&1\\ 0&1 \end{pmatrix}, \qquad \beta=\begin{pmatrix} 1&0\\ N&1 \end{pmatrix}
\end{align*} and note that $\alpha$ and $\beta$ generate a free group of rank $2$, since they can be simultaneously conjugated into $\left( \begin{smallmatrix} 1&\sqrt{N}\\ 0&1 \end{smallmatrix} \right)$ and $\left( \begin{smallmatrix} 1&0\\ \sqrt{N}&1 \end{smallmatrix} \right)$. Put
\begin{align*}
\omega= \begin{pmatrix} 1& 1\\ 0 &1 \end{pmatrix}
\begin{pmatrix} 1& 0\\ N^2 &1 \end{pmatrix}
\begin{pmatrix} 1& -1\\ 0 &1 \end{pmatrix}=\begin{pmatrix} 1+N^2& -N^2\\ N^2 &1-N^2 \end{pmatrix}.
\end{align*}
Then $\omega\in \E_2(N^2)$ but $\omega\notin \F_2(N)$, as $\alpha\beta^N\alpha^{-1}$ is not a word in $\alpha^N$ and $\beta$. Also $\omega\in \Delta_2(N)$ while $\omega^{-1}\alpha^N\omega\notin \F_2(N)$ since $\alpha \beta^{-N}\alpha^N\beta^N\alpha^{-1}$ is not a word in $\alpha^N$ and $\beta$.
\end{proof}

On the other hand, we have the following:

\begin{thm}[Vaserstein]\label{Vas}
Let $A$ be the ring of integers in a number field $K$ which is neither the rational field $\Q$, nor an imaginary quadratic field $\Q(\sqrt{-D})$, and let $\pi$ be an ideal in $A$. Then:
\begin{itemize}
\item $\F_2(\pi)$ is normal in $\Delta_2(\pi)$,
\item $\E_2(\pi^2)$ is contained in $\F_2(\pi)$, 
\item the natural homomorphism $\G_2(\pi^2)/\E_2(\pi^2)\to\Delta_2(\pi)/\F_2(\pi)$ is an isomorphism.
\end{itemize}
\end{thm}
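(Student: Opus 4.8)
The three bullets are the $n=2$ shadows of, respectively, Theorem~\ref{N}, Theorem~\ref{Tits}, and the first surjection of Theorem~\ref{squeeze}; the preceding Proposition shows that the first two fail over $\Z$, so any argument must consume the arithmetic hypothesis in an essential way. My plan is to quarantine that hypothesis in a single symbol lemma and then reuse the machinery of Sections~2 and~3 almost verbatim. For $n=2$ the Suslin factorization collapses: the only pair is $(k,l)=(1,2)$, there are no indices $s\neq k,l$, and $c_{12}=\mp\det g=\mp 1$, so the factorization becomes the single symbol $g^{-1}e_{12}(a)g=S(g_{22},-g_{21};-a)$, and likewise $g^{-1}e_{21}(a)g=S(-g_{12},g_{11};a)$. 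In each case the first two arguments form a column of $g^{-1}\in\SL_2(A)$, hence a unimodular pair.

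Everything then rests on the following \emph{symbol lemma}: if $(x,y)$ is unimodular and $S(x,y;z)\in\Delta_2(\pi)$, then $S(x,y;z)\in\F_2(\pi)$. (For unimodular $(x,y)$ one checks that $S(x,y;z)\in\Delta_2(\pi)$ is equivalent to $z\in\pi$ together with $xyz\in\pi^2$; this covers both $z\in\pi^2$ with $x,y$ arbitrary and $z\in\pi$ with one of $x,y$ in $\pi$.) Granting the lemma, the first two bullets drop out of the collapsed factorization. Taking $a\in\pi^2$ gives $g^{-1}e_{ij}(a)g\in\F_2(\pi)$ for all $g\in\E_2(A)$, so the generators of the normal closure $\E_2(\pi^2)$ lie in $\F_2(\pi)$, whence $\E_2(\pi^2)\subseteq\F_2(\pi)$. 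For the normality bullet, Section~3 gives $\Delta_2(\pi)=\F_2(\pi)\cdot\G_2(\pi^2)$, so it suffices that $\G_2(\pi^2)$ normalize $\F_2(\pi)$; and for $g\in\G_2(\pi^2)$ the conjugates $g^{-1}e_{12}(a)g$ and $g^{-1}e_{21}(a)g$ with $a\in\pi$ are symbols lying in $\Delta_2(\pi)$ on unimodular pairs, hence in $\F_2(\pi)$ by the lemma.

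The symbol lemma is the whole difficulty, and the exclusion of $\Q$ and the imaginary quadratic fields enters here through the positivity of the unit rank $r_1+r_2-1$, i.e. the infinitude of $A^\times$. The elementary tool is the identity $S(x,y;z)=e_{12}(x/y)\,e_{21}(y^2z)\,e_{12}(-x/y)$, valid when $y\in A^\times$ (with a transpose-symmetric version for $x\in A^\times$), which exhibits $S(x,y;z)$ as an elementary conjugate that manifestly lies in $\F_2(\pi)$ once the relevant entries fall in $\pi$. The plan is to bring an arbitrary unimodular symbol into this ``unit in a slot'' form. The permitted moves are the additivity $S(x,y;z+z')=S(x,y;z)\,S(x,y;z')$ and conjugation by $\mathrm{diag}(u,u^{-1})$, which sends $S(x,y;z)$ to $S(ux,u^{-1}y;z)$; the point is to spend the infinitely many units so as to replace a coordinate of the unimodular pair by a unit while remaining inside the appropriate relative subgroup. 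This manoeuvre is of strong-approximation type and is exactly the technical core of Vaserstein's $\SL_2$ normality theorem for Dedekind rings of arithmetic type, which I would invoke. The infinite unit group plays the role of the third coordinate that drove the row and column reductions in the proofs of Theorems~\ref{Tits} and~\ref{N}, and its absence over $\Z$ is precisely what the Proposition detects. I expect this reduction to be the main obstacle.

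The third bullet is then bookkeeping. By the first two bullets and Vaserstein's normality of $\E_2(\pi)$ in $\SL_2(A)$, the inclusion $\G_2(\pi^2)\hookrightarrow\Delta_2(\pi)$ descends to a homomorphism $\phi\colon\G_2(\pi^2)/\E_2(\pi^2)\to\Delta_2(\pi)/\F_2(\pi)$, which is onto because $\Delta_2(\pi)=\F_2(\pi)\cdot\G_2(\pi^2)$. Its kernel is $\big(\F_2(\pi)\cap\G_2(\pi^2)\big)/\E_2(\pi^2)$; since $\F_2(\pi)\subseteq\E_2(\pi)$ and $\E_2(\pi^2)\subseteq\F_2(\pi)\cap\G_2(\pi^2)$, this kernel vanishes precisely when $\E_2(\pi)\cap\G_2(\pi^2)=\E_2(\pi^2)$, equivalently when the natural surjection $\G_2(\pi^2)/\E_2(\pi^2)\to\G_2(\pi)/\E_2(\pi)$ of Section~3 is injective. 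This last statement is the $\SL_2$ face of the relative $K_1$ computation behind the congruence subgroup property, available in our range through the work of Serre and Vaserstein. With it in hand, $\phi$ is an isomorphism, which is exactly the assertion that $\Delta_2(\pi)/\F_2(\pi)$ is a group isomorphic to $\G_2(\pi^2)/\E_2(\pi^2)$.
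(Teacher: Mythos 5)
First, a point of comparison: the paper does not prove Theorem~\ref{Vas} at all --- it is quoted from Vaserstein \cite{Vas72}, with the gap later repaired by Liehl \cite{Lie}. So the question is whether your reductions are sound as mathematics. For the first two bullets they are, as far as they go: your collapse of the Suslin factorization at $n=2$ is computed correctly ($g^{-1}e_{12}(a)g=S(g_{22},-g_{21};-a)$, etc.), your characterization of when a unimodular symbol lies in $\Delta_2(\pi)$ is right, and the passage from the symbol lemma to normality via $\Delta_2(\pi)=\F_2(\pi)\cdot\G_2(\pi^2)$ (which does hold for $n=2$, by Section~3) is valid. But the symbol lemma itself --- as you candidly say --- is ``exactly the technical core of Vaserstein's theorem, which I would invoke.'' So for these two bullets your proposal is ultimately the same citation the paper makes, wrapped in correct but routine reductions; that is acceptable, though it is not an independent proof.

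The genuine error is in the third bullet. You deduce injectivity of $\phi$ from the claim that the natural surjection $\G_2(\pi^2)/\E_2(\pi^2)\onto\G_2(\pi)/\E_2(\pi)$ is injective, calling this ``the $\SL_2$ face of the relative $K_1$ computation'' of Serre and Vaserstein. No such injectivity is available, and it is false in general. The hypotheses allow totally imaginary fields of unit rank at least one, e.g.\ $K=\Q(\zeta_5)$, and there the relative quotients $C_q=\G_2(q)/\E_2(q)$ are \emph{not} constant in $q$: by the Serre/Bass--Milnor--Serre computations (\cite{Ser70}, \cite{BMS67}), $C_q$ is trivial when $q$ is insufficiently divisible at the primes over $|\mu_K|$ (for instance $C_A=1$, and $C_{(\lambda)}=1$ for $\lambda=1-\zeta_5$), while $C_q\neq 1$ for $q$ sufficiently divisible --- this non-constancy is precisely why the congruence kernel equals $\mu_K\neq 1$ for totally imaginary $K$. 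Since every map $C_{\pi^2}\to C_\pi$ is onto (Section~3), along a chain $q\supset q^2\supset q^4\supset\cdots$ whose terms pass from trivial to nontrivial $C$-groups the finite orders must strictly jump somewhere, and at that spot $C_{\pi^2}\onto C_\pi$ cannot be injective. Worse, your argument, if correct, would yield $\Delta_2(\pi)/\F_2(\pi)\simeq\G_2(\pi)/\E_2(\pi)$ for every $\pi$, contradicting the conjunction of Vaserstein's actual statement ($\Delta_2(\pi)/\F_2(\pi)\simeq\G_2(\pi^2)/\E_2(\pi^2)$) with the non-constancy just described. The whole point of the third bullet --- and of Theorem~\ref{squeeze} --- is that $\Delta_2(\pi)/\F_2(\pi)$ is captured at level $\pi^2$, not at level $\pi$; the two $K$-groups genuinely differ. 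So the injectivity of $\phi$, i.e.\ $\F_2(\pi)\cap\G_2(\pi^2)=\E_2(\pi^2)$, must be proved directly (as Vaserstein and Liehl do); it cannot be bought from a rigidity of relative $K_1$ that does not hold. A smaller quibble in the same step: your ``precisely when'' is only an implication in the direction you use, since the converse would need $\E_2(\pi)\cap\G_2(\pi^2)\subseteq\F_2(\pi)$, which is not justified --- but that is harmless compared with the false injectivity claim.
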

 
This is taken from \cite{Vas72}. A gap in Vaserstein's paper was later corrected by Liehl \cite{Lie}.

Vaserstein's theorem conforms with the principle that, over an arithmetic ring with infinitely many units, $\SL_2$ should behave like $\SL_n$ with $n\geq 3$. This principle was born with Serre's solution \cite{Ser70} to the Congruence Subgroup Problem for $\SL_2$. In the case of a ring of integers $A$ as in Theorem~\ref{Vas}, Serre's results say that each quotient $\G_2(\pi)/\E_2(\pi)$ is a finite cyclic group whose order divides the number of roots of unity in $K$, and which is furthermore trivial if $K$ admits a real embedding. By Vaserstein's theorem, the same statement applies to the quotient $\Delta_2(\pi)/\F_2(\pi)$.

\bigskip
\noindent\textbf{Acknowledgements.} Constructive suggestions from Andrei Rapinchuk, feedback and the occasional espresso from Laurent Bartholdi, respectively the support of the Alexander von Humboldt Foundation are all gratefully acknowledged.

\end{document}